\newcommand\NN{\mathrm{I\!N}}
\newcommand\ZZ{\mathbf{Z}}
\newcommand\eps{{\varepsilon}}
\renewcommand\mod{\,\operatorname{mod}}
\newcommand{\address}{Address: Department of Mathematics, University of North Texas, 1155 Union Circle \#311430, Denton, TX 76203-5017, USA; E-mail: allaart@unt.edu}
\newtheorem{theorem}{Theorem}
\newtheorem{corollary}[theorem]{Corollary}
\newtheorem{proposition}[theorem]{Proposition}
\title{An inequality for sums of binary digits, with application to Takagi functions} 
\author{Pieter C. Allaart \\
University of North Texas \footnote{\address}}
\date{\today}
\begin{document}

\maketitle

\begin{abstract}
Let $\phi(x)=2\inf\{|x-n|:n\in\ZZ\}$, and define for $\alpha>0$ the function
\begin{equation*}
f_\alpha(x)=\sum_{j=0}^\infty \frac{1}{2^{\alpha j}}\phi(2^j x).
\end{equation*}
Tabor and Tabor [{\em J. Math. Anal. Appl.} {\bf 356} (2009), 729--737] recently proved the inequality
\begin{equation*}
f_\alpha\left(\frac{x+y}{2}\right)\leq \frac{f_\alpha(x)+f_\alpha(y)}{2}+|x-y|^\alpha,
\end{equation*}
for $\alpha\in[1,2]$. By developing an explicit expression for $f_\alpha$ at dyadic rational points, it is shown in this paper that the above inequality can be reduced to a simple inequality for weighted sums of binary digits. That inequality, which seems of independent interest, is used to give an alternative proof of the result of Tabor and Tabor, which captures the essential structure of $f_\alpha$.

\bigskip
{\it AMS 2000 subject classification}: 26A27 (primary); 26A51 (secondary)

\bigskip
{\it Key words and phrases}: Takagi function, Approximate convexity, Digital sum inequality.

\end{abstract}

\section{Introduction}

Let $\phi(x)=2\inf\{|x-n|:n\in\ZZ\}$ be the so-called ``tent-map," and define for $\alpha>0$ the function
\begin{equation}
f_\alpha(x)=\sum_{j=0}^\infty \frac{1}{2^{\alpha j}}\phi(2^j x).
\label{eq:definition-of-f}
\end{equation}
Observe that $f_1$ is two times Takagi's continuous nowhere differentiable function; see \cite{Takagi}. For $0<\alpha<1$, the graph of $f_\alpha$ is a fractal whose Hausdorff dimension was calculated by Ledrappier \cite{Ledrappier}. For $\alpha>1$, the function $f_\alpha$ is Lipschitz and hence differentiable almost everywhere. The special case $\alpha=2$ gives the only smooth function in this family, as $f_2(x)=4x(1-x)$. 

This paper concerns the following inequality, proved recently by Tabor and Tabor \cite{Tabor}.

\begin{theorem} \label{thm:Tabor}
{\rm (Tabor and Tabor \cite[Corollary 2.1]{Tabor})}.
For every $1\leq\alpha\leq 2$ and for all $x,y\in[0,1]$,
\begin{equation}
f_\alpha\left(\frac{x+y}{2}\right)\leq \frac{f_\alpha(x)+f_\alpha(y)}{2}+|x-y|^\alpha.
\label{eq:approximate-convexity}
\end{equation}
\end{theorem}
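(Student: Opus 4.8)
The plan is to reduce \eqref{eq:approximate-convexity} to a finite, purely combinatorial inequality about binary digits and to prove the latter by induction. First, since $\phi$ is continuous, bounded, and $1$-periodic, the series \eqref{eq:definition-of-f} converges uniformly (by the Weierstrass $M$-test, using $2^{-\alpha j}\le 2^{-j}$ for $\alpha\ge1$), so $f_\alpha$ is continuous. Dyadic rationals are dense in $[0,1]$, and if $x_k\to x$, $y_k\to y$ then $(x_k+y_k)/2\to(x+y)/2$ and $|x_k-y_k|^\alpha\to|x-y|^\alpha$; hence it suffices to prove \eqref{eq:approximate-convexity} when $x=a/2^n$ and $y=b/2^n$ for integers $0\le a\le b\le 2^n$, the midpoint $(a+b)/2^{n+1}$ being again dyadic so that all three arguments have terminating binary expansions. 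For such a point $x=0.\eps_1\eps_2\cdots\eps_n$ the series \eqref{eq:definition-of-f} terminates, since $\phi(2^j x)=0$ for $j\ge n$. Computing each $\phi(2^j x)$ directly from the digits $\eps_{j+1},\dots,\eps_n$ — equivalently, iterating the functional equation $f_\alpha(x)=\phi(x)+2^{-\alpha}f_\alpha(2x)$, which follows from the $1$-periodicity of $\phi$ — yields an explicit formula expressing $f_\alpha(x)$ as a sum, weighted by $2^{-\alpha j}$, of quantities recording whether pairs of binary digits of $x$ agree or disagree. I would isolate this as a lemma; it is the device promised in the abstract.

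Substituting this formula for the three points $a/2^n$, $b/2^n$, and $(a+b)/2^{n+1}$ into \eqref{eq:approximate-convexity}, and expressing the binary digits of $a+b$ in terms of those of $a$ and $b$ together with the carries, I expect \eqref{eq:approximate-convexity} to collapse to an inequality of the shape
\[
\text{(weighted digital sum in the digits of $a$ and $b$)}\;\le\;\left(\frac{b-a}{2^n}\right)^\alpha,
\]
with all weights of the form $2^{-\alpha j}$. This is the self-contained ``inequality for sums of binary digits'' of the title, and I would state it as the main lemma. A sanity check at the endpoint $\alpha=2$ is reassuring: since $f_2(x)=4x(1-x)$ gives $f_2((x+y)/2)-\tfrac12\bigl(f_2(x)+f_2(y)\bigr)=(x-y)^2$, inequality \eqref{eq:approximate-convexity} holds there with equality, so the digital-sum inequality must be tight at $\alpha=2$.

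To prove the digital-sum inequality I would induct on $n$, exploiting the self-similarity: deleting the leading binary digit of both $a$ and $b$ rescales the left-hand side by the factor $2^{-\alpha}$ coming from the functional equation, reducing an $n$-digit instance to an $(n-1)$-digit one. The induction step would split into cases according to the leading digit pair $(\eps_1,\eps_1')$ of $a$ and $b$ and the parity of $a+b$ (which governs the carry into the midpoint), and in each case the reduction to the smaller instance should leave only an elementary residual inequality in the real parameter $\alpha$.

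The main obstacle is this last step, and specifically the residual $\alpha$-inequalities, which is exactly where the hypothesis $1\le\alpha\le2$ must be spent. I expect to need elementary estimates such as the superadditivity $(u+v)^\alpha\ge u^\alpha+v^\alpha$ and a convexity comparison of the type $(u+v)^\alpha\le 2^{\alpha-1}(u^\alpha+v^\alpha)$, with the two endpoints $\alpha=1$ and $\alpha=2$ playing opposing roles — the lower bound controlling the geometric weights and the upper bound forcing the error term $(b-a)^\alpha$ to dominate, consistent with the equality case just noted at $\alpha=2$. Keeping the carry bookkeeping of the reduction clean enough that the residual inequalities match the correct endpoint of $[1,2]$ is where I anticipate the real difficulty.
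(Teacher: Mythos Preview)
Your opening moves---continuity, reduction to dyadic $x=(m-l)/2^n$, $y=(m+l)/2^n$, and an explicit dyadic formula for $f_\alpha$---coincide with the paper's. Where you diverge is in the \emph{form} of the digital inequality and the induction variable. You plan to carry the digits of $a$, $b$, and the carries of $a+b$ separately, induct on the number $n$ of digits, and split into cases by leading-digit pair and parity; you then hope the residual inequalities are handled by $(u+v)^\alpha\ge u^\alpha+v^\alpha$ and $(u+v)^\alpha\le 2^{\alpha-1}(u^\alpha+v^\alpha)$. That is essentially the style of the original Boros/Tabor--Tabor arguments the paper sets out to replace; it can be pushed through, but the carry bookkeeping you flag as ``the real difficulty'' is genuinely delicate, and the two convexity bounds you list are not the ones that actually close the induction.

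The paper's point is that the right reformulation removes all of this. Writing $p=\alpha-1$ and $s_p(n)=\sum_j 2^{pj}\eps_j(n)$, $S_p(n)=\sum_{m<n}s_p(m)$, one finds from the dyadic formula that
\[
2^{(n-1)\alpha}\!\left[f_\alpha\!\Big(\tfrac{m}{2^n}\Big)-\tfrac12 f_\alpha\!\Big(\tfrac{m-l}{2^n}\Big)-\tfrac12 f_\alpha\!\Big(\tfrac{m+l}{2^n}\Big)\right]=S_p(m+l)+S_p(m-l)-2S_p(m),
\]
so \eqref{eq:approximate-convexity} is \emph{equivalent} to the second-difference inequality $S_p(m+l)+S_p(m-l)-2S_p(m)\le l^{p+1}$. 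The induction is then on $l$, not on $n$: choose $k$ with $2^{k-1}<l\le 2^k$, compare the blocks $[m-l,m+l)$ and $[m-l+2^k,m+l)$ (their low $k$ bits match termwise), apply the hypothesis to the inner block of length $2(2^k-l)$, and the only residual real-variable fact needed is the convexity of
\[
g_p(x)=2x-1+(1-x)^{p+1}-x^{p+1}\quad\text{on }[1/2,1],\qquad g_p(1/2)=g_p(1)=0,
\]
valid precisely for $0\le p\le 1$, i.e.\ $1\le\alpha\le 2$. No carry tracking, no leading-digit case split. Your proposal is not wrong in spirit, but it misses this cumulative-sum formulation and the induction on $l$, which are the paper's actual contribution; and the residual inequality that spends the hypothesis $1\le\alpha\le 2$ is $g_p\le 0$, not the super/sub-additivity estimates you anticipate.
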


This inequality plays an important role in the study of approximate convexity of continuous functions, where $f_\alpha$ occurs naturally in a best possible upper bound; see \cite{Tabor}. For the case $\alpha=1$, the inequality had previously been proved by Boros \cite{Boros}. Note that for $\alpha=2$, \eqref{eq:approximate-convexity} holds with equality for all $x$ and $y$ in $[0,1]$. Both Boros' proof and Tabor and Tabor's proof of \eqref{eq:approximate-convexity}, while cleverly devised, provide little insight into the essential structure of the function $f_\alpha$. The aim of this note is to show how \eqref{eq:approximate-convexity} can be reduced to a simple inequality concerning weighted sums of binary digits, thereby providing a simpler proof for the inequality \eqref{eq:approximate-convexity} that emphasizes the basic structure of $f_\alpha$.

We need the following notation. For a nonnegative integer $n$ and a real number $p$, write $n$ in binary as $n=\sum_{j=0}^\infty 2^j\eps_j$ with $\eps_j\in\{0,1\}$, and define
\begin{equation}
s_p(n)=\sum_{j=0}^\infty 2^{pj}\eps_j.
\label{eq:binary-sum}
\end{equation}
Let
\begin{equation*}
S_p(n)=\sum_{m=0}^{n-1}s_p(m).
\end{equation*}
It turns out that \eqref{eq:approximate-convexity} is equivalent to the simple inequality
\begin{equation}
S_p(m+l)+S_p(m-l)-2S_p(m)\leq l^{p+1}, 
\label{eq:main-inequality}
\end{equation}
for $0\leq p\leq 1$ and $0\leq l\leq m$. This inequality, which seems to be of independent interest, is proved in Section \ref{sec:digitsum}; there we also specify the cases when equality holds in \eqref{eq:main-inequality}. Note that when $p=0$, $S_p(n)$ is the number of $1$'s needed to 
express the numbers $0,\dots,n-1$ in binary. Since we can write \eqref{eq:main-inequality} in this case as
\begin{equation*}
S_0(m+l)-S_0(m)-[S_0(m)-S_0(m-l)]\leq l,
\end{equation*}
it follows that for any list of $2l$ consecutive positive integers $m-l,\dots,m+l-1$, the number of binary $1$'s needed to write the second half of the list (the numbers $m,\dots,m+l-1$) is at most $l$ more than the number of $1$'s required to write the first half (the numbers $m-l,\dots,m-1$). The function $S_0$ has been well studied in the literature; see, for instance, Trollope \cite{Trollope} for a precise expression and asymptotics. When $p=1$, $S_p(n)$ is simply the sum of the first $n-1$ positive integers, from which it follows readily that \eqref{eq:main-inequality} holds with equality for all $l$ and $m$. It seems that for $0<p<1$ the inequality may be new. In fact, even for the case $p=0$ the author has not been able to find a reference.

The key to showing that \eqref{eq:approximate-convexity} reduces to \eqref{eq:main-inequality} is the following formula for the values of $f_\alpha$ at dyadic rational points.

\begin{proposition} \label{prop:expression}
For $n=0,1,\dots$ and $m=0,1,\dots,2^n$,
\begin{equation}
f_\alpha\left(\frac{m}{2^n}\right)=\sum_{k=0}^{m-1}\sum_{i=0}^{n-1}\frac{{(-1)}^{\eps_i(k)}}{2^{(n-i-1)\alpha+i}},
\label{eq:expression}
\end{equation}
where $\eps_i(k)\in\{0,1\}$ is determined by $\sum_{i=0}^{n-1}2^i\eps_i(k)=k$.
\end{proposition}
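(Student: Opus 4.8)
The plan is to establish \eqref{eq:expression} by a telescoping argument. Since $\phi$ vanishes at integers we have $f_\alpha(0)=0$, so it suffices to show that each forward difference
\begin{equation*}
\Delta_k:=f_\alpha\left(\frac{k+1}{2^n}\right)-f_\alpha\left(\frac{k}{2^n}\right)
\end{equation*}
equals the inner sum $\sum_{i=0}^{n-1}(-1)^{\eps_i(k)}/2^{(n-i-1)\alpha+i}$; summing these differences over $k=0,\dots,m-1$ then collapses the left side to $f_\alpha(m/2^n)$. Thus the whole problem reduces to evaluating $\Delta_k$ for a single $k\in\{0,\dots,2^n-1\}$.

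First I would expand $\Delta_k$ using \eqref{eq:definition-of-f} as $\sum_{j=0}^\infty 2^{-\alpha j}\bigl[\phi(2^j(k+1)/2^n)-\phi(2^j k/2^n)\bigr]$. For $j\ge n$ the arguments $2^{j-n}(k+1)$ and $2^{j-n}k$ are integers, so both values of $\phi$ are $0$ and the tail disappears, leaving a finite sum over $j=0,\dots,n-1$. The $j$-th summand compares $\phi$ at the endpoints of the dyadic interval $[k/2^{n-j},(k+1)/2^{n-j}]$, whose length is $2^{-(n-j)}\le\tfrac12$.

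The crux of the argument is to show that $\phi$ is affine on each such interval and to pin down the sign of its slope. The corners of $\phi$ lie at multiples of $\tfrac12$, while for $0\le j\le n-1$ the endpoints $k/2^{n-j}$ are multiples of $2^{-(n-j)}$ with $n-j\ge1$; hence a corner $p/2$ lying strictly inside would force an integer $p\,2^{n-j-1}$ strictly between $k$ and $k+1$, which is impossible. So $\phi$ has constant slope $\pm2$ on the interval and the increment is $\pm2\cdot2^{-(n-j)}=\pm2^{-(n-j-1)}$. The delicate point is the sign: the slope is $+2$ or $-2$ according to whether the fractional part of $k/2^{n-j}$ lies in $[0,\tfrac12)$ or $[\tfrac12,1)$, and that fractional part is exactly $0.\eps_{n-j-1}\eps_{n-j-2}\cdots\eps_0$ in binary. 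The sign is therefore governed solely by the leading fractional bit $\eps_{n-j-1}(k)$, making the $j$-th summand equal to $(-1)^{\eps_{n-j-1}(k)}\,2^{-(n-j-1)}$. I expect this bookkeeping to be the main obstacle: one must correctly identify which binary digit of $k$ controls the sign and confirm that the interval never straddles a corner of $\phi$ (the edge cases where an endpoint sits exactly at a corner are harmless, since the interval then lies wholly on one monotone branch).

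Finally I would substitute $i=n-j-1$, so that $2^{-\alpha j}\cdot(-1)^{\eps_{n-j-1}(k)}\,2^{-(n-j-1)}$ becomes $(-1)^{\eps_i(k)}/2^{(n-i-1)\alpha+i}$ and the index $i$ runs over $0,\dots,n-1$. This identifies $\Delta_k$ with the desired inner sum, and the telescoping from the first paragraph yields \eqref{eq:expression}.
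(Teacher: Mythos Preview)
Your argument is correct, and the telescoping reduction to the forward differences $\Delta_k$ is exactly the paper's strategy as well. Where you diverge is in how $\Delta_k$ is evaluated: you compute it directly from the series \eqref{eq:definition-of-f}, observing that on each dyadic interval $[k/2^{n-j},(k+1)/2^{n-j}]$ the tent map is affine and reading off the sign from the binary digit $\eps_{n-j-1}(k)$. The paper instead first establishes the midpoint recursion
\[
f_\alpha\!\left(\frac{2j+1}{2^{n+1}}\right)=\tfrac12\left\{f_\alpha\!\left(\frac{j}{2^n}\right)+f_\alpha\!\left(\frac{j+1}{2^n}\right)\right\}+\frac{1}{2^{\alpha n}},
\]
derives from it a one-step recursion for the forward differences, and then solves that recursion by induction on $n$. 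Your route is shorter and more self-contained, avoiding the inductive machinery entirely; the paper's route has the incidental benefit that the midpoint recursion is recorded explicitly and reused later (in the proof of the Corollary on equality cases). Both approaches ultimately rest on the same geometric fact---that $\phi$ is linear on half-integer intervals---but you exploit it term-by-term in the series, whereas the paper packages it into a self-similarity relation for $f_\alpha$.
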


For $\alpha=1$, this formula simplifies to a well-known expression for the Takagi function; see, for instance, Kr\"uppel \cite[eq. (2.4)]{Kruppel}. The formula in its general form above does not seem to have been published before, and could be useful for studying a variety of other properties of the functions $f_\alpha$, including their level sets and finer differentiability structure. For the Takagi function (i.e. $f_1$), the level sets were considered for instance by Maddock \cite{Maddock}, and a description of the set of points $x$ with $f_1'(x)=\pm\,\infty$ was given by Allaart and Kawamura \cite{AK}. In these papers, explicit expressions such as \eqref{eq:expression} above played an important role.

Proposition \ref{prop:expression} is proved in Section \ref{sec:Takagi}. It is then used, together with \eqref{eq:main-inequality}, to give a short proof of Theorem \ref{thm:Tabor}.

\section{A digital sum inequality} \label{sec:digitsum}

This section gives a proof of the inequality \eqref{eq:main-inequality}, 
and specifies in which cases equality holds.

\begin{theorem} \label{thm:main}
Let $0\leq p\leq 1$. Then \eqref{eq:main-inequality} holds for all $m\geq  0$ and $0\leq l\leq m$. Moreover, if $l\geq 1$ and $k$ is the integer such that $2^{k-1}<l\leq 2^k$, then equality holds in \eqref{eq:main-inequality} if and only if one of the following holds:\vspace{-0.2em}
\begin{enumerate}[(i)]\setlength{\itemsep}{-0.2em}
\item $p=1$; or
\item $0<p<1$, $l=2^k$, and $m\equiv l \mod 2^{k+1}$ ; or
\item $p=0$, and either $m\equiv l \mod 2^{k+1}$ or $m\equiv -l \mod 2^{k+1}$.
\end{enumerate}
\end{theorem}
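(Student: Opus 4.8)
The plan is to place the entire content of the inequality in a single elementary fact about the first difference of $s_p$, and then to build up from powers of $2$ to general $l$ by strong induction on the radius $l$. Throughout write $D_p(m,l)=S_p(m+l)+S_p(m-l)-2S_p(m)$ for the left-hand side of \eqref{eq:main-inequality}, and $\Delta_p(a)=s_p(a+1)-s_p(a)$ for the first difference of $s_p$. First I would record the scaling relations that follow at once from $s_p(2q+\eps)=\eps+2^p s_p(q)$ (for $\eps\in\{0,1\}$): namely $S_p(2N)=2^{p+1}S_p(N)+N$ and $S_p(2N+1)=2^{p+1}S_p(N)+N+2^p s_p(N)$, together with the block identity $s_p(a2^k+r)=s_p(r)+2^{pk}s_p(a)$ valid for $0\le r<2^k$, and the reflection identity $s_p(i)+s_p(2^k-1-i)=c_k$ for $0\le i<2^k$, where $c_k=\sum_{j=0}^{k-1}2^{pj}$. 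The crucial lemma is then that $\Delta_p(a)\le 1$ for every $a\ge 0$ and every $0\le p\le 1$, with equality if and only if $p=1$ or $a$ is even. This is a one-line computation: if $a$ ends in exactly $t$ binary $1$'s, then $\Delta_p(a)=2^{pt}-\sum_{j=0}^{t-1}2^{pj}$, so that $\Delta_p(a)-1=(2^{pt}-1)(2^p-2)/(2^p-1)\le 0$ for $0<p<1$, the cases $p\in\{0,1\}$ being checked directly. This lemma is the combinatorial heart of the theorem, and it is what ultimately produces all three equality regimes.

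Next I would dispose of the powers of two. Writing $m=a2^k+r$ with $0\le r<2^k$ and summing the block identity over the two length-$2^k$ windows $[m,m+2^k)$ and $[m-2^k,m)$ gives the exact formula $D_p(m,2^k)=2^{pk}\bigl[(2^k-r)\Delta_p(a-1)+r\Delta_p(a)\bigr]$. Since the bracket is a nonnegative combination of $\Delta_p(a-1)$ and $\Delta_p(a)$ with coefficients summing to $2^k$, the lemma yields $D_p(m,2^k)\le 2^{pk}\cdot 2^k=(2^k)^{p+1}$, and equality forces every engaged $\Delta_p$ to equal $1$. For $0\le p<1$ this means $r=0$ and $a-1$ even, i.e.\ $m\equiv 2^k\pmod{2^{k+1}}$, which is exactly conditions (ii) and (iii) restricted to $l=2^k$ (note $-2^k\equiv 2^k\pmod{2^{k+1}}$, so the two congruences of (iii) collapse here); for $p=1$ equality is automatic.

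For general $l$ with $2^{k-1}<l<2^k$ I would argue by strong induction on $l$. Setting $m=a2^k+r$ again, each length-$l$ window meets at most two cells of the grid $2^k\ZZ$, and the position of $r$ relative to $2^k-l$ and $l$ splits the analysis into a central regime $2^k-l<r<l$ and two symmetric side regimes. Summing the block identity, the $2^{pk}$-terms collapse, exactly as for powers of two, into a nonnegative combination of $\Delta_p(a)$ and $\Delta_p(a-1)$; the remaining $S_p$-terms reorganize, using the $S_p$-recursion and the reflection identity, into a second difference $D_p$ of strictly smaller radius (radius $2^k-l$ in the central regime, radius $l-2^{k-1}$ in the side regimes), which is at most $(\text{radius})^{p+1}$ by the inductive hypothesis. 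In the central regime this reduces the desired bound to the single elementary inequality
\[
(2^k-l)^{p+1}+2^{pk}(2l-2^k)\le l^{p+1},\qquad 2^{k-1}\le l\le 2^k,
\]
which holds because $G(l):=l^{p+1}-(2^k-l)^{p+1}-2^{pk}(2l-2^k)$ satisfies $G(2^{k-1})=G(2^k)=0$ and $G''(l)=(p+1)p\,[\,l^{p-1}-(2^k-l)^{p-1}\,]\le 0$ on the interval (by monotonicity of $x\mapsto x^{p-1}$ for $0<p<1$), so $G\ge 0$ there, strictly on the open interval. This strictness is what forces $l=2^k$ in the equality analysis for $0<p<1$.

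Finally I would assemble the equality statement. For $0<p<1$ the concavity inequality is strict whenever $2^{k-1}<l<2^k$, so equality in \eqref{eq:main-inequality} can occur only at $l=2^k$, giving case (ii). For $p=0$, by contrast, $G\equiv 0$, so the analytic step is an identity and equality survives for every $l$ precisely when all the $\Delta_0$'s carrying positive weight equal $1$; tracing the two side regimes shows these are the two ``edge-aligned'' straddle configurations $m\equiv l$ and $m\equiv -l\pmod{2^{k+1}}$, which is case (iii). I expect the main obstacle to be exactly this general-$l$ bookkeeping: in each side regime the $S_p$-terms reorganize not only into a smaller-radius $D_p$ and a $\Delta_p$-combination but also into elementary block-sums of $s_p$, and these residuals must be estimated by hand using the reflection identity before the bound closes. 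That estimate, together with the exhaustive equality analysis at $p=0$—ruling out every $m$ outside the two congruence classes—is where the reflection identity and the three-way regime split have to be handled with care.
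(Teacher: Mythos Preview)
Your proposal shares the paper's essential ingredients: the key lemma $\Delta_p(a)\le 1$ (with the same one--line proof), strong induction on the radius, and the same concavity inequality $G(l)=l^{p+1}-(2^k-l)^{p+1}-2^{pk}(2l-2^k)\ge 0$ on $[2^{k-1},2^k]$. The difference is organisational. You condition on $r=m\bmod 2^k$ and split into a central regime and two side regimes; in the central regime your computation reproduces exactly the paper's bound, but in the side regimes the $S_p$--residuals do \emph{not} collapse cleanly into a single smaller $D_p$---there is a leftover term (for instance, in the ``left'' side regime one finds $D_p(m,l)=D_p(2^{k-1}+r,\,l-2^{k-1})+(2r-2^{k-1})2^{p(k-1)}+2^{pk}(l-r)\Delta_p(a-1)$), and you correctly flag that closing this requires a separate hand estimate. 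The paper sidesteps the entire regime split with a single uniform decomposition: writing
\[
D_p(m,n)=D_p(m,\,2^k-n)+\sum_{r=0}^{2n-2^k-1}\bigl[s_p(m-n+2^k+r)-s_p(m-n+r)\bigr],
\]
it observes that each summand equals $2^{pk}\Delta_p(\lfloor (m-n+r)/2^k\rfloor)$ regardless of the low $k$ bits, so the bound $(2n-2^k)2^{pk}+(2^k-n)^{p+1}$ follows for every $m$ at once. Your explicit formula for $D_p(m,2^k)$ and your framing of the equality analysis through which $\Delta_p$'s carry positive weight are nice and arguably make the $p=0$ equality cases more transparent than the paper's separate induction; but for the inequality itself the paper's route is strictly simpler, and you could adopt it to eliminate the side--regime bookkeeping that you yourself identify as the main obstacle.
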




\begin{proof}
{\bf Part 1: Inequality.} 
Fix $p\in[0,1]$. For brevity, write
\begin{equation*}
\Delta(m,l):=S_p(m+l)+S_p(m-l)-2S_p(m).
\end{equation*}
Note that the inequality is trivially satisfied when $l=0$.
The proof proceeds by induction on $l$. First, let $l=1$, and note that in this case,
\begin{equation*}
\Delta(m,1)=s_p(m)-s_p(m-1).
\end{equation*}
Consider two cases regarding the parity of $m$. If $m$ is odd, then $\eps_0(m-1)=0$ and $\eps_0(m)=1$, while $\eps_j(m-1)=\eps_j(m)$ for all $j\geq 1$. Hence, $\Delta(m,1)=1$.

Assume then that $m$ is even. In this case, there is $j_0\geq 1$ such that: 
\begin{gather*}
\eps_j(m-1)=1 \quad\mbox{and} \quad \eps_j(m)=0\quad \mbox{for $0\leq j<j_0$, }\\ 
\eps_{j_0}(m-1)=0 \quad\mbox{and} \quad \eps_{j_0}(m)=1,\qquad \mbox{and}\\
\eps_j(m-1)=\eps_j(m)\quad \mbox{for all $j>j_0$}.
\end{gather*}
Thus,
\begin{equation}
\Delta(m,1)=2^{j_0 p}-\sum_{j=0}^{j_0-1}2^{jp}.
\label{eq:delta_m1}
\end{equation}
If $p=0$, it follows immediately that $\Delta(m,1)<1$. If $0<p\leq 1$, we may put $\lambda=2^p$ and obtain that
\begin{equation}
\Delta(m,1)-1=\lambda^{j_0}-1-\frac{\lambda^{j_0}-1}{\lambda-1}=(\lambda^{j_0}-1)\frac{\lambda-2}{\lambda-1}\leq 0.
\label{eq:basis-case}
\end{equation}
Thus, \eqref{eq:main-inequality} holds for $l=1$ and all $m\geq 1$. In fact, if $l=1$ and $p<1$, it is clear from \eqref{eq:delta_m1} and \eqref{eq:basis-case} that equality obtains in \eqref{eq:main-inequality} if and only if $m$ is odd.

Next, let $n\geq 2$, and assume that $\Delta(m,l)\leq l^{p+1}$ for all $l<n$ and all $m$. For ease of notation, put
\begin{equation*}
\Sigma(t,u):=\sum_{r=t}^{u-1}s_p(r)=S_p(u)-S_p(t), \qquad t,u\in\NN,\ \ t<u.
\end{equation*}
Let $k$ be the integer such that $2^{k-1}<n\leq 2^k$. The idea is to write
\begin{align*}
\Delta(m,n)&=\Sigma(m-n+2^k,m+n)-\Sigma(m-n,m+n-2^k)\\
&\qquad+\Sigma(m,m-n+2^k)-\Sigma(m+n-2^k,m).
\end{align*}
(See Figure \ref{fig:illustration}, which also illustrates the next few steps of the induction argument.)

\begin{figure}
\begin{align*}
\left.
\begin{array}{r}
m-n\\
\ 
\end{array} \quad
\begin{array}{ccc}
1 & 0 & 1\\
1 & 0 & 1
\end{array}
\begin{array}{|ccc|}
\hline
0 & 1 & 0\\
0 & 1 & 1\\
\hline
\end{array} \quad
\right\}\,\quad &\Sigma(m-n,m+n-2^k)\\
\left.
\begin{array}{r}
m+n-2^k\\
\ \\
\ 
\end{array} \quad
\begin{array}{cccccc}
 1 & 0 & 1 & 1 & 0 & 0\\
 1 & 0 & 1 & 1 & 0 & 1\\
 1 & 0 & 1 & 1 & 1 & 0\\
\hline
\end{array}
\quad
\right\} \quad &\Sigma(m+n-2^k,m)\\
\left.
\begin{array}{r}
m\\
\ \\
\ 
\end{array} \quad
\begin{array}{ccc}
 1 & 0 & 1\\
 1 & 1 & 0\\
 1 & 1 & 0
\end{array}
\begin{array}{ccc}
1 & 1 & 1\\
0 & 0 & 0\\
0 & 0 & 1
\end{array} \quad
\right\} \quad &\Sigma(m,m-n+2^k)\\
\left.
\begin{array}{r}
m-n+2^k\\
\ 
\end{array} \quad
\begin{array}{ccc}
 1 & 1 & 0\\
 1 & 1 & 0
\end{array}
\begin{array}{|ccc|}
\hline
0 & 1 & 0\\
0 & 1 & 1\\
\hline
\end{array} \quad
\right\}\,\quad &\Sigma(m-n+2^k,m+n)
\end{align*}
\caption{The induction step in the proof of  \eqref{eq:main-inequality} illustrated for $m=47$ and $n=5$.}
\label{fig:illustration}
\end{figure}

Since the list $m,\dots,m-n+2^k-1$ has $2^k-n$ elements and $2^k-n<n$, the induction hypothesis implies that
\begin{equation}
\Sigma(m,m-n+2^k)-\Sigma(m+n-2^k,m)=\Delta(m,2^k-n)\leq {(2^k-n)}^{p+1}.
\label{eq:middle-group}
\end{equation}
On the other hand, for $r=0,1,\dots,2n-2^k-1$, the numbers $m-n+r$ and $m-n+2^k+r$ have their $k$ least significant binary digits in common (see the boxes in Figure \ref{fig:illustration}), and so
\begin{equation}
s_p(m-n+2^k+r)-s_p(m-n+r)=2^{kp}\{s_p(t+1)-s_p(t)\},
\label{eq:one-by-one}
\end{equation}
where $t$ is the greatest integer in $(m-n+r)/2^k$; this follows because the terms for $j=0,\dots,k-1$ in the definition \eqref{eq:binary-sum} cancel each other in the left hand side above. (For example, in Figure \ref{fig:illustration} we have $m=47$ and $n=5$, so $k=3$, and $t=5$ for both $r=0$ and $r=1$.)
By \eqref{eq:one-by-one} and the fact that \eqref{eq:main-inequality} holds for the case $l=1$,
\begin{equation}
\Sigma(m-n+2^k,m+n)-\Sigma(m-n,m+n-2^k)\leq (2n-2^k)\cdot 2^{kp},
\label{eq:outside-groups}
\end{equation}
with strict inequality when $p<1$ and $t$ is odd.
Combining \eqref{eq:middle-group} and \eqref{eq:outside-groups}, we obtain
\begin{align*}
\Delta(m,n)&\leq (2n-2^k)\cdot 2^{kp}+{(2^k-n)}^{p+1}\\
&=2^{k(p+1)}\left[2\left(\frac{n}{2^k}\right)-1+\left(1-\frac{n}{2^k}\right)^{p+1}\right].
\end{align*}
Put $x=n/2^k$. Then $1/2<x\leq 1$, and it will follow that $\Delta(m,n)\leq n^{p+1}$ provided that 
\begin{equation}
2x-1+(1-x)^{p+1}\leq x^{p+1}.
\label{eq:elementary-inequality}
\end{equation}
But this last inequality follows since the function 
\begin{equation}
g_p(x):=2x-1+(1-x)^{p+1}-x^{p+1}
\label{eq:g}
\end{equation}
is convex on $[1/2,1]$ for $p\in[0,1]$, with $g_p(1/2)=g_p(1)=0$. This concludes the inductive proof of the inequality \eqref{eq:main-inequality}. 

(It is worth noting that \eqref{eq:elementary-inequality} was used also by Tabor and Tabor \cite{Tabor} in their proof of \eqref{eq:approximate-convexity}.)

\bigskip
{\bf Part 2: Equality.}
We now turn to the question of equality. It was noted in the introduction that if $p=1$, then $s_p(n)=n$, and so $\Delta(m,l)=l^2$ for all $l$ and $m$. Suppose $0<p<1$. If $l=2^k$ and $m\equiv l \mod 2^{k+1}$, then 
\begin{equation*}
\Delta(m,l)=l\cdot 2^{kp}=l^{p+1}.
\end{equation*}
On the other hand, if $l=2^k$ but $m\not\equiv l \mod 2^{k+1}$, then strict inequality obtains in \eqref{eq:outside-groups} in the induction step, as the greatest integer in $(m-l+r)/2^k$ is odd for at least one $r$. Finally, if $l<2^k$, then with $x=l/2^k$ we have strict inequality in \eqref{eq:elementary-inequality}, since the function $g_p$ defined in \eqref{eq:g} is strictly convex on $[1/2,1]$ when $0<p<1$.

The case $p=0$ is the most involved. We will show inductively that $\Delta(m,l)=l$ if and only if $m\equiv \pm\, l\mod 2^{k+1}$. Note that this equivalence holds for the case $l=1$ by the remark following \eqref{eq:basis-case}. 

Let $n\geq 2$, and assume that whenever $l<n$ and $j$ is the integer such that $2^{j-1}<l\leq 2^j$, the equivalence
\begin{equation*}
m\equiv \pm\, l\mod 2^{j+1}\quad\Longleftrightarrow\quad\Delta(m,l)=l
\end{equation*}
holds. Let $k$ be the integer such that $2^{k-1}<n\leq 2^k$, and put 
\begin{equation*}
l:=2^k-n.
\end{equation*}
Observe that $l<2^{k-1}<n$.

Suppose $m\equiv \pm\, n \mod 2^{k+1}$. Then either the binary representation of $m-n$ ends in $k$ zeros, or that of $m+n-1$ ends in $k$ ones. In both cases, 
\begin{equation}
\eps_k(m-n+2^k+r)=1\quad\mbox{and}\quad \eps_k(m-n+r)=0,\quad\mbox{for $0\leq r<2n-2^k$},
\label{eq:these-are-different}
\end{equation}
so 
\begin{equation}
\Sigma(m-n+2^k,m+n)-\Sigma(m-n,m+n-2^k)=2n-2^k=n-l.
\label{eq:outside}
\end{equation}
If $l=0$ the two middle groups vanish, so $\Delta(m,n)=n-l=n$.
Assume then that $l>0$. Let $j$ be the integer such that $2^{j-1}<l\leq 2^j$. Since $l<2^{k-1}$, we have $j<k$. If $m\equiv n\mod 2^{k+1}$, then $m+l\equiv 2^k\mod 2^{k+1}$ and hence $m+l\equiv 0\mod 2^{j+1}$. Similarly, if $m\equiv -n\mod 2^{k+1}$, then $m-l\equiv 0\mod 2^{j+1}$. Thus, by the induction hypothesis, 
\begin{equation*}
\Sigma(m+n-2^k,m)-\Sigma(m,m-n+2^k)=\Delta(m,l)=l. 
\end{equation*}
Combining this with \eqref{eq:outside} yields $\Delta(m,n)=n$.

Conversely, suppose $\Delta(m,n)=n$. Then equality must hold in both \eqref{eq:middle-group} and \eqref{eq:outside-groups}, so in particular,
\begin{equation*}
s_0(m-n+2^k+r)-s_0(m-n+r)=1, \qquad\mbox{for $0\leq r<2n-2^k$}.
\end{equation*}
This implies \eqref{eq:these-are-different}. If $n=2^k$, it follows immediately that $m\equiv n\mod 2^{k+1}$. Otherwise, $l>0$, and we let $j$ be the integer such that $2^{j-1}<l\leq 2^j$. Since
\begin{equation*}
\Delta(m,l)=\Sigma(m,m+l)-\Sigma(m-l,m)=l,
\end{equation*}
the induction hypothesis implies that $m\equiv \pm\,l\mod 2^{j+1}$. If $m\equiv l\mod 2^{j+1}$, then the binary expansion of $m-l$ ends in $j+1$ zeros. The set $A:=\{m-l,\dots,m+l-1\}$ contains $2l\leq 2^{j+1}$ numbers, so $\eps_i(\cdot)$ is constant on $A$ for each $i>j$. In particular, $\eps_k(\cdot)$ is constant on $A$, since $k>j$. The same conclusion results if $m\equiv -l\mod 2^{j+1}$, as then the binary expansion of $m+l-1$ ends in $j+1$ ones.

Suppose the common value of $\eps_k(\cdot)$ on $A$ is $0$. Since $A$ contains the numbers $m-n+r$ where $r=2n-2^k,\dots,2^k-1$, we obtain by \eqref{eq:these-are-different} that $\eps_k(m-n+r)=0$ for $r=0,\dots,2^k-1$, and so $m-n\equiv 0\mod 2^{k+1}$. On the other hand, suppose the common value of $\eps_k(\cdot)$ on $A$ is $1$. Then by \eqref{eq:these-are-different}, $\eps_k(m-n+r)=1$ for $r=2n-2^k,\dots,2n-1$, or equivalently (putting $r'=2n-r$), $\eps_k(m+n-r')=1$ for $r'=1,\dots,2^k$. But this implies $m+n\equiv 0\mod 2^{k+1}$. In either case, $m\equiv \pm\,n \mod 2^{k+1}$, as desired. Thus, the proof is complete.
\end{proof}

\section{Application to Takagi functions} \label{sec:Takagi}

This section gives a proof of Proposition \ref{prop:expression}, and shows how the expression given in the proposition can be used, in conjunction with the inequality \eqref{eq:main-inequality}, to give a more straightforward proof of the theorem of Tabor and Tabor.

\begin{proof}[Proof of Proposition \ref{prop:expression}]
Since $\phi$ vanishes at integer points, the definition \eqref{eq:definition-of-f} of $f_\alpha$ gives
\begin{gather}
f_\alpha\left(\frac{j}{2^n}\right)=\sum_{m=0}^{n-1}\frac{1}{2^{\alpha m}}\phi\left(\frac{j}{2^{n-m}}\right), \label{eq:at-left}\\
f_\alpha\left(\frac{j+1}{2^{n}}\right)=\sum_{m=0}^{n-1}\frac{1}{2^{\alpha m}}\phi\left(\frac{j+1}{2^{n-m}}\right), \label{eq:at-right}
\end{gather}
and
\begin{equation*}
f_\alpha\left(\frac{2j+1}{2^{n+1}}\right)=\sum_{m=0}^n \frac{1}{2^{\alpha m}}\phi\left(\frac{2j+1}{2^{n+1-m}}\right).
\end{equation*}
Since $\phi$ is linear on each interval $[j/2,(j+1)/2]$ with $j\in\ZZ$,
\begin{equation*}
\phi\left(\frac{2j+1}{2^{n+1-m}}\right)=\frac12\left\{\phi\left(\frac{j}{2^{n-m}}\right)+\phi\left(\frac{j+1}{2^{n-m}}\right)\right\}.
\end{equation*}
Noting also that $\phi\big((2j+1)/2\big)=1$ for all $j\in\ZZ$, we thus obtain
\begin{equation}
f_\alpha\left(\frac{2j+1}{2^{n+1}}\right)=\frac12\left\{f_\alpha\left(\frac{j}{2^n}\right)+f_\alpha\left(\frac{j+1}{2^n}\right)\right\}+\frac{1}{2^{\alpha n}},
\label{eq:midpoint-recursion}
\end{equation}
for $n=0,1,\dots$, and $j=0,1,\dots,2^n-1$.
From this, it follows that
\begin{equation}
f_\alpha\left(\frac{k+1}{2^{n+1}}\right)-f_\alpha\left(\frac{k}{2^{n+1}} \right)=\frac12\left\{f_\alpha\left(\frac{j+1}{2^n}\right)-f_\alpha\left(\frac{j}{2^n}\right)\right\}+\frac{(-1)^k}{2^{\alpha n}},
\label{eq:difference-expression}
\end{equation}
where $j=[k/2]$ is the greatest integer in $k/2$. This last equation follows easily from \eqref{eq:midpoint-recursion} by considering separately the cases $k=2j$ and $k=2j+1$.
A straightforward induction argument using \eqref{eq:difference-expression} yields
\begin{equation*}
f_\alpha\left(\frac{k+1}{2^{n+1}}\right)-f_\alpha\left(\frac{k}{2^{n+1}}\right)=\sum_{i=0}^n \frac{{(-1)}^{\eps_i}}{2^{(n-i)\alpha+i}},
\end{equation*}
where $k=\sum_{i=0}^n 2^i\eps_i$. Replacing $n$ with $n-1$ and summing over $k=0,\dots,m-1$ gives \eqref{eq:expression}, as $f_\alpha(0)=0$.
\end{proof}

\begin{proof}[Proof of Theorem \ref{thm:Tabor}]
Since $f_\alpha$ is continuous, it suffices to prove \eqref{eq:approximate-convexity} for dyadic rational points $x$ and $y$. Thus, we may assume that there exist nonnegative integers $n,m$ and $l$ such that $x=(m-l)/2^n$ and $y=(m+l)/2^n$. It is to be shown that
\begin{equation*}
\Delta_2^{(n)}(m,l):=f_\alpha\left(\frac{m}{2^n}\right)-\frac12\left\{f_\alpha\left(\frac{m-l}{2^n}\right)+f_\alpha\left(\frac{m+l}{2^n}\right)\right\}\leq {\left(\frac{2l}{2^n}\right)}^\alpha.
\end{equation*}
Proposition \ref{prop:expression} gives
\begin{align*}
\Delta_2^{(n)}(m,l)&=\sum_{i=0}^{n-1}\frac{1}{2^{(n-i-1)\alpha+i}}\left(\sum_{k=0}^{m-1}{(-1)}^{\eps_i(k)}-\frac12\sum_{k=0}^{m-l-1}{(-1)}^{\eps_i(k)}
-\frac12\sum_{k=0}^{m+l-1}{(-1)}^{\eps_i(k)}\right)\\
&=\frac{1}{2^{(n-1)\alpha}}\sum_{i=0}^{n-1}2^{(\alpha-1)i-1}\left(\sum_{k=m-l}^{m-1}{(-1)}^{\eps_i(k)}-\sum_{k=m}^{m+l-1}{(-1)}^{\eps_i(k)}\right).
\end{align*}
Since $(-1)^\eps=1-2\eps$ for $\eps\in\{0,1\}$, we can write
\begin{align*}
2^{(n-1)\alpha}\Delta_2^{(n)}(m,l)&=\sum_{i=0}^{n-1}2^{(\alpha-1)i}\sum_{r=1}^l\left\{\eps_i(m+r-1)-\eps_i(m-r)\right\}\\
&=\sum_{r=1}^l\left\{s_{\alpha-1}(m+r-1)-s_{\alpha-1}(m-r)\right\}\\
&=S_{\alpha-1}(m+l)+S_{\alpha-1}(m-l)-2S_{\alpha-1}(m).
\end{align*}
Thus, by Theorem \ref{thm:main},
\begin{equation*}
\Delta_2^{(n)}(m,l)\leq \frac{l^\alpha}{2^{(n-1)\alpha}}={\left(\frac{2l}{2^n}\right)}^\alpha,
\end{equation*}
as required.
\end{proof}

In fact, it is not difficult to use Theorem \ref{thm:main} to determine for which dyadic points $x$ and $y$ equality holds in \eqref{eq:approximate-convexity}. It was already remarked in the introduction that equality holds for all $x$ and $y$ in $[0,1]$ when $\alpha=2$. 

\begin{corollary}
(i) If $1<\alpha<2$, then equality holds in \eqref{eq:approximate-convexity} for dyadic rationals $x$ and $y$ in $[0,1]$ if and only if there exist integers $j$ and $r$ such that $x=j/2^r$ and $|x-y|=1/2^r$.

(ii) If $\alpha=1$, then equality holds in \eqref{eq:approximate-convexity} for dyadic rationals $x$ and $y$ in $[0,1]$ if and only if there exist integers $j$ and $r$ such that either $x=j/2^r$ or $y=j/2^r$, and $|x-y|\leq 1/2^r$.
\end{corollary}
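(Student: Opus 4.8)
The plan is to reverse the reduction used to prove Theorem~\ref{thm:Tabor} and read off the equality cases from Theorem~\ref{thm:main} with $p=\alpha-1$. Writing $x=(m-l)/2^n$ and $y=(m+l)/2^n$ as in that proof, the displayed computation there gives
\[
2^{(n-1)\alpha}\Delta_2^{(n)}(m,l)=S_{\alpha-1}(m+l)+S_{\alpha-1}(m-l)-2S_{\alpha-1}(m)=\Delta(m,l),
\]
while the target bound equals $l^\alpha/2^{(n-1)\alpha}$. Hence equality holds in \eqref{eq:approximate-convexity} precisely when $\Delta(m,l)=l^{\alpha}=l^{p+1}$, i.e. when \eqref{eq:main-inequality} is an equality with $p=\alpha-1$. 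Since both sides of \eqref{eq:approximate-convexity} depend only on $x$ and $y$, this equivalence does not depend on the chosen representation $(n,m,l)$: for the ``only if'' direction I may fix any convenient representation with $l\geq 1$, and for the ``if'' direction I may construct one. The degenerate case $x=y$ gives $l=0$ and trivial equality; it is absorbed by the statement of (ii), so below I assume $x\neq y$. Throughout I use the identities $m-l=2^nx$, $m+l=2^ny$, and $|x-y|=2l/2^n$.

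For part (i), $1<\alpha<2$, so $0<p<1$ and Theorem~\ref{thm:main}(ii) says equality is equivalent to $l=2^k$ and $m\equiv l\mod 2^{k+1}$, with $2^{k-1}<l\leq 2^k$. In the ``only if'' direction I set $r=n-k-1$; then $|x-y|=2^{k+1}/2^n=1/2^r$, and writing $m=2^k(2s+1)$ gives $m-l=2^{k+1}s$, whence $x=(m-l)/2^n=s/2^r$. Thus $x$ and $y$ are adjacent dyadic rationals of level $r$, which is the asserted condition. Conversely, given $x=j/2^r$ with $|x-y|=1/2^r$, I order the pair and take $n=r+1$, so that $l=2^{n-1}|x-y|=1=2^0$ and $m=2^{n-1}(x+y)$ is an odd integer; then $k=0$, $l=2^k$, and $m\equiv l\equiv 1\mod 2$, so Theorem~\ref{thm:main}(ii) yields equality.

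For part (ii), $\alpha=1$, so $p=0$ and Theorem~\ref{thm:main}(iii) says equality is equivalent to $m\equiv l\mod 2^{k+1}$ or $m\equiv -l\mod 2^{k+1}$, with $2^{k-1}<l\leq 2^k$. In the ``only if'' direction, again set $r=n-k-1$. If $m\equiv l\mod 2^{k+1}$, then $2^{k+1}$ divides $m-l=2^nx$, so $x=j/2^r$ with $j=(m-l)/2^{k+1}$; and since $l\leq 2^k$, $|x-y|=(l/2^k)/2^r\leq 1/2^r$. The alternative $m\equiv -l$ gives $y=j/2^r$ with the same bound. Conversely, suppose one endpoint, say the smaller one $x'$ of the ordered pair, equals $j/2^r$ and $0<|x-y|\leq 1/2^r$; choose $n$ large enough that $m$ and $l$ are integers. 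Then $l=2^{n-1}|x-y|\leq 2^{n-r-1}$, so $2^{k-1}<l$ forces $k+1\leq n-r$; hence $2^{k+1}$ divides $2^{n-r}j=m-l$, i.e. $m\equiv l\mod 2^{k+1}$, and equality follows from Theorem~\ref{thm:main}(iii). If instead the larger endpoint is the dyadic one, the same computation with $m+l=2^ny$ gives $m\equiv -l\mod 2^{k+1}$.

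The step I expect to require the most care is the two-way translation between the arithmetic conditions of Theorem~\ref{thm:main} and the geometric conditions on $x,y$---in particular, verifying in each converse that the constructed representation really meets the hypotheses of the relevant case (the power-of-two value $l=2^k$ in (i), and the divisibility $2^{k+1}\mid(m\mp l)$ forced by $|x-y|\leq 1/2^r$ in (ii)). The symmetry of \eqref{eq:approximate-convexity} in $x$ and $y$ is what lets me pass freely between the two modular alternatives in Theorem~\ref{thm:main}(iii) and the single conditions ``$x=j/2^r$'' and ``$y=j/2^r$'' in the statement, and it is worth recording the consistency check that the conditions of Theorem~\ref{thm:main} are invariant under the scaling $(n,m,l)\mapsto(n+1,2m,2l)$ that fixes $x$ and $y$.
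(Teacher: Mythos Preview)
Your proposal is correct and follows essentially the same route as the paper: reduce equality in \eqref{eq:approximate-convexity} to equality in \eqref{eq:main-inequality} via the computation in the proof of Theorem~\ref{thm:Tabor}, then translate the arithmetic conditions of Theorem~\ref{thm:main} into the geometric conditions on $x$ and $y$ by setting $r=n-k-1$. The only cosmetic differences are that you verify the ``if'' direction of (i) by constructing a representation with $l=1$ rather than invoking \eqref{eq:midpoint-recursion}, and in (ii) your ``if'' argument bounds $k+1\leq n-r$ directly, avoiding the paper's preliminary normalization of $r$.
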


\begin{proof}
The proof of Theorem \ref{thm:Tabor} shows that for $x=(m-l)/2^n$ and $y=(m+l)/2^n$, equality holds in \eqref{eq:approximate-convexity} if and only if equality holds in \eqref{eq:main-inequality} with $p=\alpha-1$.

(i) Let $1<\alpha<2$. If $x=j/2^r$ and $y=(j+1)/2^r$, then equality holds in \eqref{eq:approximate-convexity} by \eqref{eq:midpoint-recursion}. 
Conversely, if $x$ and $y$ are dyadic rationals in $[0,1]$ with $x<y$  which attain equality in \eqref{eq:approximate-convexity}, then we can write $x=(m-l)/2^n$ and $y=(m+l)/2^n$ for integers $l,m$ and $n$, and it follows from Theorem \ref{thm:main} (with $0<p<1$) that there exist integers $k$ and $j$ such that $l=2^k$, and $m=(2j+1)2^k$. But then, putting $r=n-k-1$, we get $x=j/2^r$ and $y=(j+1)/2^r$.

(ii) Let $\alpha=1$, and suppose first that $x=j/2^r$ and $y$ is dyadic rational with $x<y\leq (j+1)/2^r$. Replacing $r$ with a greater integer if necessary, we may assume that $1/2^{r+1}<y-x\leq 1/2^r$. Write $y-x=l/2^s$, where $l,s\in \ZZ$. Now put $n=s+1$, $k=s-r=n-r-1$, and $m=l+2^{k+1}j$. Then $2^{k-1}<l\leq 2^k$, $m\equiv l \mod 2^{k+1}$, $x=(m-l)/2^n$, and $y=(m+l)/2^n$. So by Theorem \ref{thm:main} (with $p=0$), $x$ and $y$ satisfy \eqref{eq:approximate-convexity} with equality. The case $(j-1)/2^r\leq y<x$ follows similarly.

Conversely, let $x$ and $y$ be dyadic rationals in $[0,1]$ with $x<y$  which attain equality in \eqref{eq:approximate-convexity}. Then we can write $x=(m-l)/2^n$ and $y=(m+l)/2^n$ for integers $l,m$ and $n$. Theorem \ref{thm:main} implies that $m\equiv \pm\,l \mod 2^{k+1}$, so there is $j\in\ZZ$ such that either $m=l+2^{k+1}j$ or $m=-l+2^{k+1}j$, where $k$ is the integer such that $2^{k-1}<l\leq 2^k$. Suppose $m=l+2^{k+1}j$, and put $r=n-k-1$. Then $x=j/2^r$ and $y=x+2l/2^n$, so $y$ is a dyadic rational with $y\leq x+1/2^r$, since $2l/2^n\leq 2^{k+1}/2^n=1/2^r$. The case $m=-l+2^{k+1}j$ is similar, leading to $y=j/2^r$ and $x=y-2l/2^n$, so again $|y-x|\leq 1/2^r$.
\end{proof}

Note that by continuity of $f_1$, it follows from statement (ii) in the Corollary that for $\alpha=1$, equality holds in \eqref{eq:approximate-convexity} whenever $x=j/2^r$ and $y$ is any {\em real} number with $|x-y|\leq 1/2^r$. This is not surprising, considering the self-affine structure of $f_1$. However, it seems difficult to determine whether equality can hold in \eqref{eq:approximate-convexity} when $x$ and $y$ are both nondyadic.

\section*{Acknowledgment}
The author wishes to thank two anonymous referees for a careful reading of the manuscript and for several suggestions which improved the presentation of the paper.

\footnotesize

\end{document}